\newtheorem{theorem}{Theorem}
\theoremstyle{plain}
\newtheorem{proposition}{Proposition}
\newtheorem{remark}{Remark}
\numberwithin{equation}{section}
\begin{document}
\title{On the second-order tangent bundle with deformed 2-nd lift metric}
\author{Abdullah MA\u{G}DEN}
\address{Ataturk University, Faculty of Science, Department of Mathematics,
25240, Erzurum-Turkey.}
\email{amagden@atauni.edu.tr}
\author{Kubra KARACA}
\address{Ataturk University, Faculty of Science, Department of Mathematics,
25240, Erzurum-Turkey.}
\email{kubrakaraca91@gmail.com}
\author{Aydin GEZER}
\address{Ataturk University, Faculty of Science, Department of Mathematics,
25240, Erzurum-Turkey.}
\email{agezer@atauni.edu.tr}

\begin{abstract}
Let $(M,g)$ be a pseudo-Riemannian manifold and $T^{2}M$ be its the
second-order tangent bundle equipped with the deformed $2-$nd lift metric $%
\overline{g}$ which obtained from the $2-$nd lift metric by deforming the
horizontal part with a symmetric $(0,2)-$tensor field $c$. In the present
paper, we first compute the Levi-Civita connection \ and its Riemannian
curvature tensor field of $(T^{2}M,g)$. We give necessary and sufficient
conditions for $(T^{2}M,g)$ to be semi-symmetric. Secondly, we show that $%
(T^{2}M,g)$ is a plural-holomorphic $B-$manifold with the natural integrable
nilpotent structure. Finally, we get the conditions under which $(T^{2}M,g)$
with the $2-$nd lift of an almost complex structure is an anti-K\"{a}hler
manifold.

\textit{AMS Mathematics Subject Classification (2010)}: 53C07, 53C15, 53C35.

\textit{Keywords: }Anti-K\"{a}hler manifold, Deformed $2-$nd lift metric,
Killing vector field, plural-holomorphic $B-$manifold, Semi-symmetriy.
\end{abstract}

\maketitle

\section{\protect\bigskip Introduction}

\noindent Given an $n-$dimensional manifold $M$, a second-order tangent
bundle $T^{2}M$ over $M$ can be constructed from the equivalent classes of
curves on $M$ which agree up to their acceleration (for details, see \cite%
{Dodson} and \cite{YanoIshihara}). Moreover, in \cite{Dodson}, it is proved
that a second-order tangent bundle $T^{2}M$ becomes a vector bundle over $M$
if and only if $M$ has a linear connection. The prolongations of tensor
fields and connections given on $M$ to its second-order tangent bundle $%
T^{2}M$ was studied in \cite{YanoIshihara}. Let $(M,g)$ be an $n-$%
dimensional pseudo-Riemannian manifold and $T^{2}M$ be its second-order
tangent bundle. The $2-$nd lift metric on $T^{2}M$ was defined and studied
by Yano and Ishihara in \cite{YanoIshihara}.

We point out here and once that all geometric objects considered in this
paper are supposed to be of class $C^{\infty }$. In this section, we recall
some fundamental facts on the second-order tangent bundle that are needed
later.

The second-order tangent bundle $T^{2}M$ of a differentiable manifold $M$ is
the $3n-$dimensional manifold as the set of all $2-$jets of $M$ determined
by mappings of the real line $%
%TCIMACRO{\U{211d} }%
%BeginExpansion
\mathbb{R}
%EndExpansion
$ into $M$. The canonical projection $\pi _{2}:$ $T^{2}M\longrightarrow M$
defines the natural bundle structure of $T^{2}M$ over $M$. If we introduce
the canonical projection $\pi _{12}:$ $T^{2}M\longrightarrow TM$, then $%
T^{2}M$ has a bundle structure over the tangent bundle $TM$ with projection $%
\pi _{12}$. In the paper, we use Einstein's convention on repeated indices.

Let $(U,x^{i})$ be a system of coordinates in $M$ and $F$\ be a curve in $U$
which locally expressed as $x^{i}=F^{i}(t)$. If we take a $2-$jet $j^{2}F$
belonging to $\pi _{2}^{-1}(U)$ and define 
\begin{equation*}
x^{i}=F^{i}(0),\text{ }y^{i}=\frac{dF^{i}}{dt}(0),\text{ }z^{i}=\frac{1}{2}%
\frac{d^{2}F^{i}}{dt^{2}}(0),
\end{equation*}%
then the $2-$jet $j^{2}F$ is expressed uniquely by the set $%
(x^{i},y^{i},z^{i})$. Thus, $(x^{i},y^{i},z^{i})$ is the system of
coordinates induced in $\pi _{2}^{-1}(U)$ from $(U,x^{i})$. The coordinates $%
(x^{i},y^{i},z^{i})$ in $\pi _{2}^{-1}(U)$ are called the induced
coordinates and sometimes denote them by $\{\xi ^{A}\}$, that is, by putting%
\begin{equation*}
\xi ^{i}=x^{i},\text{ }\xi ^{\overline{i}}=y^{i},\text{ }\xi ^{\overline{%
\overline{i}}}=z^{i}.
\end{equation*}%
The indices $A,B,C,...$ run over the range $%
\{1,2,...,n;n+1,n+2,...,2n;2n+1,2n+2,...,3n\}$.

For a function $f$ locally expressed by $f=f(x)$ on $M$, there corresponds
on $T^{2}M$ the $0-$th, the $1-$st and the $2-$nd lifts of the function $f$
respectively defined by%
\begin{equation}
^{0}f=f(x);\text{ }^{I}f=y^{i}\partial _{i}f(x);\text{ }^{II}f=z^{i}\partial
_{i}f(x)+\frac{1}{2}y^{j}y^{i}\partial _{j}\partial _{i}f(x)  \label{AA2.0}
\end{equation}%
with respect to the induced coordinates $\{\xi ^{A}\}$, where $\partial _{i}=%
\frac{\partial }{\partial x^{i}}$.

Given a vector field $X=X^{i}\partial _{i}$ on $M$, we define the the $0-$%
th, the $1-$st and the $2-$nd lifts of $X$ to $T^{2}M$ as follows: \cite%
{YanoIshihara}%
\begin{equation}
{}^{0}X=X^{j}\partial _{\overline{\overline{j}}},  \label{AA2.1}
\end{equation}%
\begin{equation}
{}^{I}X=X^{j}\partial _{\overline{j}}+y^{s}\partial _{s}{X}^{j}\partial _{%
\overline{\overline{j}}}  \label{AA2.2}
\end{equation}%
and%
\begin{equation}
^{II}X=X^{j}\partial _{j}+y^{s}\partial _{s}{X}^{j}\partial _{\overline{j}%
}+\{z^{s}\partial _{s}X^{j}+\frac{1}{2}y^{t}y^{s}\partial _{t}\partial
_{s}X^{j}\}\partial _{\overline{\overline{j}}}  \label{AA2.3}
\end{equation}%
with respect to the induced coordinates $\{\xi ^{A}\}$, where $\partial _{i}=%
\frac{\partial }{\partial x^{i}},$ $\partial _{\overline{i}}=\frac{\partial 
}{\partial y^{i}},$ $\partial _{\overline{\overline{i}}}=\frac{\partial }{%
\partial z^{i}}$. By (\ref{AA2.1})-(\ref{AA2.3}) and (\ref{AA2.0}) we have
directly%
\begin{eqnarray*}
{}^{0}X^{0}f &=&0\text{,}^{0}X^{I}f=0\text{,}^{0}X^{II}f=^{0}(Xf), \\
^{I}X^{0}f &=&0\text{,}^{I}X^{I}f=^{0}(Xf)\text{,}^{I}X^{II}f=^{I}(Xf), \\
^{II}X^{0}f &=&^{0}(Xf)\text{,}^{II}X^{I}f=^{I}(Xf)\text{,}%
^{II}X^{II}f=^{II}(Xf)
\end{eqnarray*}%
for any vector field $X$ and function $f$ on $M$.

For the Lie bracket on $T^{2}M$ in terms of the lifts of vector fields $X,$ $%
Y$ on $M$, we have the following formulas: \cite{YanoIshihara}%
\begin{equation}
\left\{ 
\begin{array}{l}
{\left[ {}^{0}X,{}^{0}Y\right] =0}, \\ 
{\left[ {}^{0}X,{}{}^{I}Y\right] =0,}\text{ }{\left[ {}^{II}X,{}{}^{0}Y%
\right] ={}}^{0}{\left[ {}X,Y\right] ,}\text{ }{\left[ {}^{II}X,{}{}^{II}Y%
\right] =}^{II}{\left[ {}X,Y\right] } \\ 
{\left[ {}^{I}X,{}{}^{I}Y\right] =}^{0}{\left[ {}X,Y\right] ,}\text{ }{\left[
{}^{II}X,{}{}^{I}Y\right] ={}}^{I}{\left[ {}X,Y\right] .}%
\end{array}%
\right.  \label{AA2.4}
\end{equation}

\begin{remark}
The $2-$nd lift defined by (\ref{AA2.3}) determines an isomorphism of the
Lie algebra of vector fields on $M$ into the Lie algebra of vector fields on 
$T^{2}M$.
\end{remark}

\section{The deformed $2-$nd lift metric on the second-order tangent bundle}

The $0-$th, the $1-$st and the $2-$nd lifts of a pseudo-Riemannian metric on
a manifold $M$ to the second-order tangent bunde $T^{2}M$ is respectively
given by%
\begin{equation*}
^{0}g=\left( 
\begin{array}{ccc}
g_{ij} & 0 & 0 \\ 
0 & 0 & 0 \\ 
0 & 0 & 0%
\end{array}%
\right)
\end{equation*}%
\begin{equation}
^{I}g=\left( 
\begin{array}{ccc}
y^{s}\partial _{s}g_{ij} & g_{ij} & 0 \\ 
g_{ij} & 0 & 0 \\ 
0 & 0 & 0%
\end{array}%
\right)  \label{AA3.0}
\end{equation}

$\geq $%
\begin{equation}
^{II}g=\left( 
\begin{array}{ccc}
z^{s}\partial _{s}g_{ij}+\frac{1}{2}y^{t}y^{s}\partial _{t}\partial
_{s}g_{ij} & y^{s}\partial _{s}g_{ij} & g_{ij} \\ 
y^{s}\partial _{s}g_{ij} & g_{ij} & 0 \\ 
g_{ij} & 0 & 0%
\end{array}%
\right)  \label{AA3.1}
\end{equation}%
with respect to the induced coordinates $\{\xi ^{A}\}$, where $g_{ij}$
denote local componnents of $g$ on $M$ \cite{YanoIshihara}. By using the $0-$%
th lift of a symmetric $(0,2)$-tensor field $c$ on $(M,g)$ to $T^{2}M$, the
deformed $2-$nd lift metric on $T^{2}M$ is defined by $\overline{g}%
=^{II}g+^{0}c$, that is,%
\begin{equation*}
\overline{g}=^{II}g+^{0}c=\left( 
\begin{array}{ccc}
z^{s}\partial _{s}g_{ij}+\frac{1}{2}y^{t}y^{s}\partial _{t}\partial
_{s}g_{ij}+c_{ij} & y^{s}\partial _{s}g_{ij} & g_{ij} \\ 
y^{s}\partial _{s}g_{ij} & g_{ij} & 0 \\ 
g_{ij} & 0 & 0%
\end{array}%
\right)
\end{equation*}%
with respect to the induced coordinates $\{\xi ^{A}\}$, where $c_{ij}$ are
local components of $c$ on $M$. Also note that the deformed $2-$nd lift
metric is a pseudo-Riemannian metric.

Using (\ref{AA2.1})-(\ref{AA2.3}) and (\ref{AA3.1}), we get:

\begin{proposition}
Let $(M,g)$ be a pseudo-Riemannian manifold and $T^{2}M$ be its second-order
tangent bundle equipped with the deformed $2-$nd lift metric $\overline{g}$.
For any vector field $X,Y$ on $M$,

i) $\overline{g}(^{0}X,^{0}Y)=0,$ ii) $\overline{g}(^{0}X,^{I}Y)=0,$ iii) $%
\overline{g}(^{0}X,^{II}Y)=^{0}(g(X,Y))$

iv) $\overline{g}(^{I}X,^{I}Y)=^{0}(g(X,Y)),$ v) $\overline{g}%
(^{I}X,^{II}Y)=^{I}(g(X,Y)),$

vi) $\overline{g}(^{II}X,^{II}Y)=^{II}(g(X,Y))+^{0}(c(X,Y)).$
\end{proposition}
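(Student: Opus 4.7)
The plan is to prove all six identities by a single, uniform strategy: simply evaluate the bilinear form $\overline{g}$ on the given pairs of lifted vector fields using the matrix expression of $\overline{g}$ and the component formulas (\ref{AA2.1})--(\ref{AA2.3}). Write $\overline{g}_{AB}$ as a $3 \times 3$ block matrix indexed by the partitions $\{i\}, \{\bar{i}\}, \{\bar{\bar{i}}\}$ of coordinate directions, and record the nonzero components of each lift as a triple: $^{0}X$ has only the $\bar{\bar{j}}$-component $X^{j}$; $^{I}X$ has $\bar{j}$-component $X^{j}$ and $\bar{\bar{j}}$-component $y^{s}\partial_{s}X^{j}$; $^{II}X$ has all three components $X^{j}$, $y^{s}\partial_{s}X^{j}$, $z^{s}\partial_{s}X^{j}+\tfrac{1}{2}y^{t}y^{s}\partial_{t}\partial_{s}X^{j}$.

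With this bookkeeping in place, identities (i)--(iv) are immediate. Parts (i) and (ii) follow because the only nonzero blocks of $\overline{g}$ that could be reached vanish: the $(\bar{\bar{i}},\bar{\bar{j}})$ and $(\bar{\bar{i}},\bar{j})$ blocks are both $0$. Part (iii) pairs $^{0}X$ with $^{II}Y$ and picks up exactly the $(\bar{\bar{i}},j)$ block $g_{ij}$, giving $g_{ij}X^{i}Y^{j} = {}^{0}(g(X,Y))$. Part (iv) contracts $^{I}X$ with $^{I}Y$; only the $(\bar{i},\bar{j})$ block $g_{ij}$ contributes, yielding $g_{ij}X^{i}Y^{j}={}^{0}(g(X,Y))$ since the $(\bar{i},\bar{\bar{j}})$ and $(\bar{\bar{i}},\bar{\bar{j}})$ blocks are zero.

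Part (v) requires summing three nonzero contributions: the $(\bar{i},j)$ block $y^{s}\partial_{s}g_{ij}$ paired with $(X^{i},Y^{j})$, the $(\bar{i},\bar{j})$ block $g_{ij}$ paired with $(X^{i}, y^{s}\partial_{s}Y^{j})$, and the $(\bar{\bar{i}},j)$ block $g_{ij}$ paired with $(y^{s}\partial_{s}X^{i},Y^{j})$. The resulting expression $y^{s}\bigl(\partial_{s}g_{ij}\,X^{i}Y^{j}+g_{ij}\partial_{s}X^{i}Y^{j}+g_{ij}X^{i}\partial_{s}Y^{j}\bigr) = y^{s}\partial_{s}(g(X,Y))$ is precisely ${}^{I}(g(X,Y))$ by (\ref{AA2.0}).

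Part (vi) is the main computational obstacle: $^{II}X$ contracted with $^{II}Y$ produces six nonvanishing block contributions, one of which is the $c_{ij}$ piece that trivially gives ${}^{0}(c(X,Y))$. The remaining terms split naturally into those linear in $z^{s}$ and those involving the quadratic factor $y^{t}y^{s}$. The $z^{s}$-terms assemble immediately via the Leibniz rule into $z^{s}\partial_{s}(g(X,Y))$. For the $y^{t}y^{s}$-terms I will use the symmetry $y^{t}y^{s}=y^{s}y^{t}$ to merge pairs of contributions (e.g.\ $\partial_{s}g_{ij}\partial_{t}X^{i}Y^{j}$ with $\partial_{t}g_{ij}\partial_{s}X^{i}Y^{j}$), and then recognize the result as $\tfrac{1}{2}y^{t}y^{s}\partial_{t}\partial_{s}(g_{ij}X^{i}Y^{j})$ by a direct expansion of the second derivative of a triple product. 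Adding the $c$-term and appealing to (\ref{AA2.0}) for the $2$-nd lift of the function $g(X,Y)$ gives the stated identity ${}^{II}(g(X,Y))+{}^{0}(c(X,Y))$. The one genuine check is that the symmetry-driven rearrangement of the nine $y^{t}y^{s}$-terms matches exactly the nine terms in the full expansion of $\partial_{t}\partial_{s}(g_{ij}X^{i}Y^{j})$; everything else is formal.
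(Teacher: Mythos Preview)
Your proposal is correct and follows precisely the approach indicated by the paper: the paper merely states ``Using (\ref{AA2.1})--(\ref{AA2.3}) and (\ref{AA3.1}), we get'' and gives no further detail, so your block-matrix bookkeeping is exactly the intended computation, only carried out explicitly. The verification in part (vi)---matching the six $y^{t}y^{s}$-terms against $\tfrac{1}{2}y^{t}y^{s}\partial_{t}\partial_{s}(g_{ij}X^{i}Y^{j})$ via the symmetry $y^{t}y^{s}=y^{s}y^{t}$---is correct and is the only substantive step.
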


Let us denote by $L_{\overline{X}}$ the operator of Lie derivation with
respect to any vector field $\overline{X}$ on $T^{2}M$. Consider $L_{%
\overline{X}}\overline{g}$ for arbitrary vector fields $\overline{Y},%
\overline{Z}$ on $T^{2}M$, we have

\begin{eqnarray*}
(L_{^{0}X}\overline{g})(^{II}Y,^{II}Z) &=&L_{^{0}X}(\overline{g}%
(^{II}Y,^{II}Z))-\overline{g}(L_{^{0}X}\text{ }^{II}Y,^{II}Z)-\overline{g}%
(^{II}Y,L_{^{0}X}\text{ }^{II}Z) \\
&=&L_{^{0}X}(^{II}(g(X,Y))+^{0}(c(X,Y)))-\overline{g}(^{0}(L_{X}\text{ }%
Y),^{II}Z) \\
&&-\overline{g}(^{II}Y,^{0}(L_{X}Z)) \\
&=&^{0}(L_{X}(g(Y,Z)))-^{0}(g(L_{X}\text{ }Y,Z))-^{0}(g(Y,L_{X}Z)) \\
&=&^{0}(L_{X}g)(^{II}Y,^{II}Z)
\end{eqnarray*}%
\begin{eqnarray*}
(L_{^{I}X}\overline{g})(^{II}Y,^{II}Z) &=&L_{^{I}X}(\overline{g}%
(^{II}Y,^{II}Z))-\overline{g}(L_{^{I}X}\text{ }^{II}Y,^{II}Z)-\overline{g}%
(^{II}Y,L_{^{I}X}\text{ }^{II}Z) \\
&=&L_{^{I}X}(^{II}(g(Y,Z))+^{0}(c(Y,Z)))-\overline{g}(^{I}(L_{X}Y),^{II}Z) \\
&&-\overline{g}(^{II}Y,^{I}(L_{X}Z)) \\
&=&^{I}(L_{X}(g(X,Y)))-^{I}(g(L_{X}Y,Z))-^{I}(g(Y,L_{X}Z)) \\
&=&^{I}(L_{X}g)(^{II}Y,^{II}Z)
\end{eqnarray*}%
\begin{eqnarray*}
(L_{^{II}X}\overline{g})(^{II}Y,^{II}Z) &=&L_{^{II}X}(\overline{g}%
(^{II}Y,^{II}Z))-\overline{g}(L_{^{II}X}\text{ }^{II}Y,^{II}Z)-\overline{g}%
(^{II}Y,L_{^{II}X}\text{ }^{II}Z) \\
&=&L_{^{II}X}(^{II}(g(Y,Z))+^{0}(c(Y,Z)))-\overline{g}(^{II}(L_{X}Y),^{II}Z)
\\
&&-\overline{g}(^{II}Y,^{II}(L_{X}Z)) \\
&=&^{II}(L_{X}(g(Y,Z)))-^{II}(g(L_{X}Y,Z))-^{II}(g(Y,L_{X}Z)) \\
&=&^{II}(L_{X}g)(^{II}Y,^{II}Z)+^{0}(L_{X}c)(^{II}Y,^{II}Z)
\end{eqnarray*}%
As is known, \ any vector field $X$ on a (pseudo-)Riemannian manifold $(M,g)$
is a Killing vector field if and only if $L_{X}g=0$. Hence, from the
relations above we obtain the following result.

\begin{proposition}
Let $(M,g)$ be a pseudo-Riemannian manifold and $T^{2}M$ be its second-order
tangent bundle equipped with the deformed $2-$nd lift metric $\overline{g}$.

i) The $0-$th and $1-$st lifts of a vector field $X$ on $M$ are both Killing
vector fields on $(T^{2}M,\overline{g})$ if and only if $X$ is a Killing
vector field on $(M,g)$.

ii) The $2-$nd lift of a vector field $X$ on $M$ is a Killing vector field
on $(T^{2}M,\overline{g})$ if and only if $X$ is a Killing vector field on $%
(M,g)$ and $L_{X}c=0$.
\end{proposition}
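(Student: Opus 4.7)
The computations just preceding the statement already yield
\[(L_{{}^{0}X}\overline{g})({}^{II}Y,{}^{II}Z) = {}^{0}(L_X g)({}^{II}Y,{}^{II}Z), \qquad (L_{{}^{I}X}\overline{g})({}^{II}Y,{}^{II}Z) = {}^{I}(L_X g)({}^{II}Y,{}^{II}Z),\]
and
\[(L_{{}^{II}X}\overline{g})({}^{II}Y,{}^{II}Z) = {}^{II}(L_X g)({}^{II}Y,{}^{II}Z) + {}^{0}(L_X c)({}^{II}Y,{}^{II}Z).\]
These formulas alone settle the necessity half of both (i) and (ii): if the corresponding lift of $X$ is Killing, then evaluating on $({}^{II}Y,{}^{II}Z)$ for arbitrary $Y,Z$ forces $L_X g = 0$, and in the case $\alpha = II$ the algebraic independence at a fixed point of the ${}^{II}$- and ${}^{0}$-lift pieces separately forces $L_X c = 0$ as well. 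My plan is therefore to establish sufficiency by showing that $(L_{{}^{\alpha}X}\overline{g})({}^{\beta}Y,{}^{\gamma}Z) = 0$ on every remaining pair $(\beta,\gamma)\in\{0,I,II\}^2$ under the stated hypotheses.

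Since the coordinate basis fields $\partial_i$, $\partial_{\overline{i}}$, $\partial_{\overline{\overline{i}}}$ coincide with ${}^{II}\partial_i$, ${}^{I}\partial_i$, ${}^{0}\partial_i$ respectively, $C^{\infty}$-bilinearity of the symmetric tensor $L_{{}^{\alpha}X}\overline{g}$ reduces the verification to pairs of lifts. Each entry unfolds via
\[(L_{V}\overline{g})(W_1,W_2) = V\bigl(\overline{g}(W_1,W_2)\bigr) - \overline{g}([V,W_1],W_2) - \overline{g}(W_1,[V,W_2]),\]
where the values $\overline{g}({}^{\beta}Y,{}^{\gamma}Z)$ are read off Proposition~1, the brackets $[{}^{\alpha}X,{}^{\beta}Y]$ are read off (\ref{AA2.4}), and the action of an $\alpha$-lift on a $\beta$-lifted function is read off the table appearing just after (\ref{AA2.3}).

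Tracking the lift-degrees then produces a clean pattern. For $\alpha = 0$, every pair $(\beta,\gamma) \ne (II,II)$ vanishes identically, because either $\overline{g}$ already kills the pair, or $V = {}^{0}X$ annihilates any ${}^{0}$- or ${}^{I}$-lifted function it encounters, or the surviving brackets land in a zero slot of $\overline{g}$. For $\alpha = I$ the same holds except that the pair $(I,II)$ collapses, after cancellation of three terms of the form ${}^{0}(X(g(Y,Z)))$, ${}^{0}(g([X,Y],Z))$ and ${}^{0}(g(Y,[X,Z]))$, to ${}^{0}(L_X g)(Y,Z)$. For $\alpha = II$ the pairs $(0,II)$, $(I,I)$, $(I,II)$ each reduce to an expression ${}^{\epsilon}(L_X g)(Y,Z)$ with $\epsilon\in\{0,I\}$, while $(II,II)$ gives the already-computed sum. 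Under the hypotheses of (i), resp.~(ii), every such component therefore vanishes and sufficiency follows. The only real obstacle is bookkeeping: the case list is sizeable, but each individual line is a direct substitution from Proposition~1, (\ref{AA2.4}) and the lift-action table, with the predicted cancellations forced by degree counting.
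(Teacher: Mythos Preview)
Your argument is correct and proceeds along the same lines as the paper: evaluate $L_{{}^{\alpha}X}\overline{g}$ on pairs of lifted vector fields using Proposition~1, the bracket identities~(\ref{AA2.4}), and the lift-action table following~(\ref{AA2.3}). The paper itself records only the $({}^{II}Y,{}^{II}Z)$ component and deduces the proposition directly from that, whereas you go further and check the remaining components $(L_{{}^{\alpha}X}\overline{g})({}^{\beta}Y,{}^{\gamma}Z)$ as well; since the ${}^{II}$-lifts alone span only an $n$-dimensional subspace at each point of $T^{2}M$, this extra bookkeeping is precisely what the sufficiency direction requires, so your version is in fact the more complete of the two.
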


\section{The Levi-Civita Connection and its curvature tensor field}

Let $M$ be a pseudo-Riemannian manifold with a pseudo-Riemannian metric and $%
\nabla $ be the Levi-Civita connection determined by $g$. Now consider a
global linear connection $\overline{\nabla }$ on $T^{2}M$ denoted by 
\begin{equation}
{\overline{\nabla }}_{\overline{X}}\overline{Y}={}^{II}{{\nabla }_{\overline{%
X}}\overline{Y}}+{}^{0}H(\overline{X},\overline{Y})  \label{AA4.0}
\end{equation}%
for all vector fields $\overline{X},\overline{Y}$ on $T^{2}M$, where $H$ is
a $(1,2)-$tensor field on $M$. The $2-$nd lift of the Levi-Civita connection 
$\nabla $ to $T^{2}M$ satisfies%
\begin{eqnarray*}
^{II}{\nabla }_{{}^{0}X}{}^{0}Y &=&0,^{II}{\nabla }_{{}^{0}X}{}^{I}Y=0,^{II}{%
\nabla }_{{}^{0}X}{}^{II}Y={}^{0}({{\nabla }_{X}}Y), \\
^{II}{\nabla }_{{}^{I}X}{}^{0}Y &=&0,^{II}{\nabla }_{{}^{I}X}{}^{I}Y={}^{0}({%
{\nabla }_{X}}Y),^{II}{\nabla }_{{}^{I}X}{}^{II}Y={}^{I}({{\nabla }_{X}}Y),
\\
^{II}{\nabla }_{{}^{II}X}{}^{0}Y &=&{}^{0}({{\nabla }_{X}}Y),^{II}{\nabla }%
_{{}^{II}X}{}^{I}Y={}^{I}({{\nabla }_{X}}Y),^{II}{\nabla }%
_{{}^{II}X}{}^{II}Y={}^{II}({{\nabla }_{X}}Y)
\end{eqnarray*}%
for all vectors $X,Y$ on $M$ \cite{YanoIshihara}. In view of the relations
above and (\ref{AA4.0}), we now have 
\begin{eqnarray}
\overline{{\nabla }}_{{}^{0}X}{}^{0}Y &=&0,\overline{{\nabla }}%
_{{}^{0}X}{}^{I}Y=0,\overline{{\nabla }}_{{}^{0}X}{}^{II}Y={}^{0}({{\nabla }%
_{X}}Y),  \label{AA4.1} \\
\overline{{\nabla }}_{{}^{I}X}{}^{0}Y &=&0,\overline{{\nabla }}%
_{{}^{I}X}{}^{I}Y={}^{0}({{\nabla }_{X}}Y),\overline{{\nabla }}%
_{{}^{I}X}{}^{II}Y={}^{I}({{\nabla }_{X}}Y),  \notag \\
\overline{{\nabla }}_{{}^{II}X}{}^{0}Y &=&{}^{0}({{\nabla }_{X}}Y),\overline{%
{\nabla }}_{{}^{II}X}{}^{I}Y={}^{I}({{\nabla }_{X}}Y),  \notag \\
\overline{{\nabla }}_{{}^{II}X}{}^{II}Y &=&{}^{II}({{\nabla }_{X}}Y)+{}^{0}(H%
{\left( X,Y\right) }).  \notag
\end{eqnarray}%
Here, we also use the following relations: \cite{YanoIshihara} 
\begin{eqnarray*}
^{0}H(^{0}X,^{0}Y) &=&^{0}H(^{0}X,^{I}Y)=^{0}H(^{0}X,^{II}Y)=0, \\
^{0}H(^{I}X,^{0}Y) &=&^{0}H(^{I}X,^{I}Y)=^{0}H(^{I}X,^{II}Y)=0, \\
^{0}H(^{II}X,^{0}Y) &=&^{0}H(^{II}X,^{I}Y)=0,^{0}H(^{II}X,^{II}Y)=^{0}(H{%
\left( X,Y\right) }).
\end{eqnarray*}

We shall calculate the torsion tensor of the linear connection $\overline{{%
\nabla }}$. The torsion tensor $\overline{T}$ of $\overline{\nabla }$ is, by
definition, given by%
\begin{eqnarray}
\overline{T}\left( {}^{II}X,{}^{II}Y\right) &=&{\overline{\nabla }}%
_{{}^{II}X}{}^{II}Y-{\overline{\nabla }}%
_{{}^{II}Y}{}^{II}X-[{}^{II}X,{}^{II}Y]  \label{AA4.2} \\
&=&{}^{II}({{\nabla }_{X}}Y)+{}^{0}(H{\left( X,Y\right) })\mathrm{-}{}^{II}{%
\left( {\nabla }_{Y}X\right) }  \notag \\
&&-{}^{0}{\left( H\left( Y,X\right) \right) }-{{}^{II}}[X,Y]  \notag \\
&=&{{}^{0}{[H\left( X,Y\right) -H\left( Y,X\right) ].}}  \notag
\end{eqnarray}%
Next, taking covariant derivation of the deformed $2-$nd lift metric $%
\overline{g}$ with respect to the linear connection $\overline{\nabla }$, we
get%
\begin{eqnarray}
&&\left( \overline{{\nabla }}_{{}^{II}X}\overline{g}\right) \left( {}^{II}{%
Y,{}^{II}Z}\right)  \label{AA4.3} \\
&=&{}^{II}X\left( \overline{g}\left( {}^{II}{Y,{}^{II}Z}\right) \right) -%
\overline{g}\left( \overline{{\nabla }}_{{}^{II}X}{}^{II}Y,{}^{II}Z\right) -%
\overline{g}\left( {}^{II}{Y,}\overline{{\nabla }}{_{{}^{II}X}{}^{II}Z}%
\right)  \notag \\
&=&{}^{II}{X\left[ {}^{II}{\left( g\left( Y,Z\right) \right) }+{}^{0}{\left(
c\left( Y,Z\right) \right) }\right] }-\overline{g}\left( {}^{II}{\left( {%
\nabla }_{X}Y\right) }+{}^{0}{\left( H\left( X,Y\right) \right) }%
,{}^{II}Z\right)  \notag \\
&&-\overline{g}\left( {}^{II}Y,{}^{II}{\left( {\nabla }_{X}Z\right) +{}^{0}{%
\left( H\left( X,Z\right) \right) }}\right)  \notag \\
&=&{}^{II}{\left( X\left( g\left( Y,Z\right) \right) \right) }+{}^{0}{\left(
X\left( c\left( Y,Z\right) \right) \right) }-{}^{II}{\left( g\left( {\nabla }%
_{X}Y,Z\right) \right) }-{}^{0}{\left( c\left( {\nabla }_{X}Y,Z\right)
\right) }  \notag \\
&&-{}^{0}{\left( g\left( H\left( X,Y\right) ,Z\right) \right) }-{}^{II}{%
\left( g\left( Y,{\nabla }_{X}Z\right) \right) -{}^{0}{\left( c\left( Y,{%
\nabla }_{X}Z\right) \right) }}-{}^{0}{\left( g\left( Y,H\left( X,Z\right)
\right) \right) }  \notag \\
&=&{}^{0}{\left( -g\left( H\left( X,Y\right) ,Z\right) -g\left( Y,H\left(
X,Z\right) \right) +\left( {\nabla }_{X}c\left( Y,Z\right) \right) \right) .}
\notag
\end{eqnarray}%
If $\overline{{\nabla }}\overline{g}=0$ and $\overline{{\nabla }}$ is
torsion-free, then $\overline{{\nabla }}$ is the Levi-Civita connection of
the deformed $2-$nd lift metric $\overline{g}$. From (\ref{AA4.2}) and (\ref%
{AA4.3}), we find 
\begin{equation}
g\left( H\left( X,Y\right) ,Z\right) =\frac{1}{2}\left[ \left( {\nabla }%
_{X}c\right) \left( Y,Z\right) +\left( {\nabla }_{Y}c\right) \left(
X,Z\right) +\left( {\nabla }_{Z}c\right) \left( X,Y\right) \right] .
\label{AA4.4}
\end{equation}%
Hence, we get

\begin{proposition}
Let $(M,g)$ be a pseudo-Riemannian manifold and $T^{2}M$ be its second-order
tangent bundle equipped with the deformed $2-$nd lift metric $\overline{g}$.
Under the condition (\ref{AA4.4}), the linear connection $\overline{{\nabla }%
}$ is the Levi-Civita connection of $\overline{g}$.
\end{proposition}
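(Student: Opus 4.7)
The plan is to verify the two defining properties of the Levi-Civita connection for $\overline{g}$: vanishing torsion and metric compatibility. Since both $\overline{T}$ and $\overline{\nabla}\,\overline{g}$ are tensorial, it is enough to test them on the generators provided by $0$-, $1$-, and $2$-lifts of vector fields from $M$; using (\ref{AA4.1}), (\ref{AA2.4}), and Proposition 1, every resulting object reduces to something built out of $\nabla$, $g$, $c$, and $H$ on the base.

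For the torsion, the computation in (\ref{AA4.2}) already gives $\overline{T}({}^{II}X,{}^{II}Y) = {}^{0}[H(X,Y) - H(Y,X)]$, so it suffices to observe that $H$ is symmetric in its first two arguments. This is immediate from (\ref{AA4.4}): the right-hand side is invariant under $X \leftrightarrow Y$ because $c$, and therefore each $\nabla_{W}c$, is symmetric. The torsion on pairs involving at least one $0$- or $1$-lift follows directly from the connection rules (\ref{AA4.1}) and the bracket table (\ref{AA2.4}), reducing in each case to the torsion-freeness of $\nabla$ on $M$.

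For metric compatibility $(\overline{\nabla}_{\overline{X}}\,\overline{g})(\overline{Y},\overline{Z}) = 0$, I would proceed case by case on the types of the three arguments. Whenever at least one of $\overline{Y}, \overline{Z}$ is a $0$- or $1$-lift, Proposition 1 shows that $\overline{g}(\overline{Y},\overline{Z})$ is either zero or a $0$- or $1$-lift of $g(Y,Z)$; in those cases the three terms in the defining expansion of $(\overline{\nabla}_{\overline{X}}\,\overline{g})(\overline{Y},\overline{Z})$ collect into an appropriate lift of $(\nabla_{X}g)(Y,Z)$, which vanishes because $\nabla g = 0$. The only case in which the deformation $H$ can appear is $(\overline{X},\overline{Y},\overline{Z}) = ({}^{II}X,{}^{II}Y,{}^{II}Z)$, and this case is already expanded in (\ref{AA4.3}); the vanishing condition is
\[
g(H(X,Y),Z) + g(H(X,Z),Y) = (\nabla_{X} c)(Y,Z),
\]
which (\ref{AA4.4}) is engineered to satisfy once the symmetry of $c$ is used to combine the various $\nabla c$-terms on the right.

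The main obstacle is essentially organizational: the case analysis has many branches, and one must keep track of which lift type each intermediate term inhabits. The structural fact that tames the argument is that the extra ${}^{0}H$-correction in the connection contributes to $\overline{\nabla}_{\overline{X}}\overline{Y}$ only when both arguments are $2$-lifts, and the resulting $0$-lift pairs non-trivially under $\overline{g}$ only against another $2$-lift; consequently all the genuine content of the metric-compatibility check is concentrated in the single identity provided by (\ref{AA4.4}).
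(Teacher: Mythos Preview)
Your proposal is correct and follows essentially the same route as the paper: verify torsion-freeness via (\ref{AA4.2}) together with the symmetry of $H$ read off from (\ref{AA4.4}), and verify metric compatibility via the computation (\ref{AA4.3}), with all other lift-type combinations reducing to $\nabla g=0$ on the base. If anything, you are more explicit than the paper in justifying why the non-$({}^{II},{}^{II},{}^{II})$ cases are automatic.
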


Let $X$ be a vector field on $M$ with a linear connection $\nabla $. It is
well-known that $X$ is an affine Killing vector field if and only if $%
L_{X}\nabla =0$. Taking the Lie derivation of the Levi-Civita connection $%
\overline{\nabla }$ with respect to the vector fields $^{0}X$, $^{I}X$ and $%
^{II}X$, we have%
\begin{eqnarray*}
\left( L_{{}^{0}X}\overline{\nabla }\right) \left( {}^{II}Y,{}^{II}Z\right)
&=&L_{{}^{0}X}\left( {\overline{\nabla }}_{{}^{II}Y}{}^{II}Z\right) -{%
\overline{\nabla }}_{{}^{II}Y}\left( L_{{}^{0}X}{}^{II}Z\right) -{\nabla }_{%
\left[ {}^{0}X,{}^{II}Y\right] }{}^{II}Z \\
&=&L_{{}^{0}X}\left( {}^{II}{\left( {\nabla }_{Y}Z\right) +}{}^{0}{\left(
H\left( Y,Z\right) \right) }\right) -{\nabla }_{{}^{II}Y}{}^{0}{\left[ X,Z%
\right] }-{\nabla }_{{}^{0}{\left[ X,Y\right] }}{}^{II}Z \\
&=&L_{{}^{0}X}{}^{II}{\left( {\nabla }_{Y}Z\right) }+L_{{}^{0}X}{}^{0}{%
\left( H\left( Y,Z\right) \right) -{}^{0}{\left( {\nabla }_{Y}\left[ X,Z%
\right] \right) }}-{}^{0}{\left( {\nabla }_{\left[ X,Y\right] }Z\right) } \\
&=&{}^{0}{\left( L_{X}\left( {\nabla }_{Y}Z\right) \right) -}{}^{0}{\left( {%
\nabla }_{Y}\left[ X,Z\right] \right) -{}^{0}{\left( {\nabla }_{\left[ X,Y%
\right] }Z\right) }} \\
&=&{}^{0}{\left( \left( L_{X}\nabla \right) \left( Y,Z\right) \right) }
\end{eqnarray*}%
\begin{eqnarray*}
\left( L_{{}^{I}X}\overline{\nabla }\right) \left( {}^{II}Y,{}^{II}Z\right)
&=&L_{{}^{I}X}\left( {\overline{\nabla }}_{{}^{II}Y}{}^{II}Z\right) -{%
\overline{\nabla }}_{{}^{II}Y}\left( L_{{}^{I}X}{}^{II}Z\right) -{\nabla }_{%
\left[ {}^{I}X,{}^{II}Y\right] }{}^{II}Z \\
&=&L_{{}^{I}X}\left( {}^{II}{\left( {\nabla }_{Y}Z\right) +}{}^{0}{\left(
H\left( Y,Z\right) \right) }\right) -{\nabla }_{{}^{II}Y}{}^{I}{\left[ X,Z%
\right] }-{\nabla }_{{}^{I}{\left[ X,Y\right] }}{}^{II}Z \\
&=&L_{{}^{I}X}{}^{II}{\left( {\nabla }_{Y}Z\right) }+L_{{}^{I}X}{}^{0}{%
\left( H\left( Y,Z\right) \right) -{}^{I}{\left( {\nabla }_{Y}\left[ X,Z%
\right] \right) }}-{}^{I}{\left( {\nabla }_{\left[ X,Y\right] }Z\right) } \\
&=&{}^{I}{\left( L_{X}\left( {\nabla }_{Y}Z\right) \right) -{}^{I}{\left( {%
\nabla }_{Y}\left[ X,Z\right] \right) -{}^{I}{\left( {\nabla }_{\left[ X,Y%
\right] }Z\right) }}} \\
&=&{}^{I}{\left( \left( L_{X}\nabla \right) \left( Y,Z\right) \right) }
\end{eqnarray*}%
\begin{eqnarray*}
\left( L_{{}^{II}X}\overline{\nabla }\right) \left( {}^{II}Y,{}^{II}Z\right)
&=&L_{{}^{II}X}\left( {\overline{\nabla }}_{{}^{II}Y}{}^{II}Z\right) -{%
\overline{\nabla }}_{{}^{II}Y}\left( L_{{}^{II}X}{}^{II}Z\right) \mathrm{-}{%
\nabla }_{\left[ {}^{II}X,{}^{II}Y\right] }{}^{II}Z \\
&=&L_{{}^{II}X}\left( {}^{II}{\left( {\nabla }_{Y}Z\right) +}{}^{0}{\left(
H\left( Y,Z\right) \right) }\right) -{\nabla }_{{}^{II}Y}{}^{II}{\left(
L_{X}Z\right) }-{\nabla }_{{}^{II}{\left[ X,Y\right] }}{}^{II}Z \\
&=&L_{{}^{II}X}{}^{II}{\left( {\nabla }_{Y}Z\right) }+L_{{}^{II}X}{}^{0}{%
\left( H\left( Y,Z\right) \right) -{}^{II}{\left( {\nabla }_{Y}\left(
L_{X}Z\right) \right) }} \\
&&-{}^{0}{\left( H\left( Y,L_{X}Z\right) \right) -{}^{II}{\left( {\nabla }_{%
\left[ X,Y\right] }Z\right) }}-{}^{0}{\left( H\left( \left[ X,Y\right]
,Z\right) \right) } \\
&=&{}^{II}{\left( L_{X}\left( {\nabla }_{Y}Z\right) \right) }-{}^{II}{\left( 
{\nabla }_{Y}\left( L_{X}Z\right) \right) }-{}^{II}{\left( {\nabla }_{\left[
X,Y\right] }Z\right) }+L_{X}{}^{0}{\left( H\left( Y,Z\right) \right) } \\
&&{-{}^{0}{\left( H\left( Y,L_{X}Z\right) \right) -{}^{0}{\left( H\left(
L_{X}Y,Z\right) \right) }}} \\
&=&{}^{II}{\left( \left( L_{X}\nabla \right) \left( Y,Z\right) \right)
+{}^{0}{\left\{ L_{X}\left( H\left( Y,Z\right) \right) -H\left(
Y,L_{X}Z\right) -H\left( L_{X}Y,Z\right) \right\} }} \\
&=&{}^{II}{\left( \left( L_{X}\nabla \right) \left( Y,Z\right) \right) }%
+{}^{0}{\left( \left( L_{X}H\right) \left( Y,Z\right) \right) .}
\end{eqnarray*}%
Thus, the above relations give the following.

\begin{proposition}
Let $(M,g)$ be a pseudo-Riemannian manifold and $T^{2}M$ be its second-order
tangent bundle equipped with the deformed $2-$nd lift metric $\overline{g}$.

i) The $0-$th and $1-$st lifts of a vector field $X$ on $M$ are both affine
Killing vector fields on $(T^{2}M,\overline{g})$ if and only if $X$ is an
affine Killing vector field on $(M,g)$.

ii) The $2-$nd lift of a vector field $X$ on $M$ is an affine Killing vector
field on $(T^{2}M,\overline{g})$ if and only if $X$ is an affine Killing
vector field on $(M,g)$ and $L_{X}H=0$.
\end{proposition}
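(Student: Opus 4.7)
The plan is to mirror the structure of the proof of Proposition~2, leveraging the three explicit Lie-derivative calculations of $(L_{{}^{\alpha}X}\overline{\nabla})({}^{II}Y,{}^{II}Z)$ for $\alpha\in\{0,I,II\}$ carried out immediately before the statement. Since $L_{\overline{X}}\overline{\nabla}$ is a $(1,2)$-tensor field on $T^{2}M$, to establish its vanishing it will be enough to evaluate it on pairs of lifts $({}^{\beta}Y,{}^{\gamma}Z)$ with $\beta,\gamma\in\{0,I,II\}$, because such lifts locally span $T(T^{2}M)$ by virtue of the triangular pattern of their components $({}^{0}Y)^{\overline{\overline{i}}}=Y^{i}$, $({}^{I}Y)^{\overline{i}}=Y^{i}$, $({}^{II}Y)^{i}=Y^{i}$ visible in (\ref{AA2.1})--(\ref{AA2.3}).

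For part (i), I would read off directly from the first two displayed computations the identities
\[
(L_{{}^{0}X}\overline{\nabla})({}^{II}Y,{}^{II}Z)={}^{0}((L_{X}\nabla)(Y,Z)),\qquad (L_{{}^{I}X}\overline{\nabla})({}^{II}Y,{}^{II}Z)={}^{I}((L_{X}\nabla)(Y,Z)).
\]
Because the $0$-th and $1$-st lift operations are injective on tensor fields of fixed type, the vanishing of either left-hand side for all vector fields $Y,Z$ on $M$ is equivalent to $L_{X}\nabla=0$, i.e.\ to $X$ being affine Killing on $(M,g)$. The converse direction then reduces to verifying that $L_{{}^{0}X}\overline{\nabla}$ and $L_{{}^{I}X}\overline{\nabla}$ vanish on every remaining pair of lifts; those calculations go through in exactly the same way using (\ref{AA4.1}) and (\ref{AA2.4}), and they only ever produce $^{0}$- and $^{I}$-lifts of components of $L_{X}\nabla$, hence vanish under the hypothesis.

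For part (ii) the decisive identity is the third displayed computation,
\[
(L_{{}^{II}X}\overline{\nabla})({}^{II}Y,{}^{II}Z)={}^{II}((L_{X}\nabla)(Y,Z))+{}^{0}((L_{X}H)(Y,Z)).
\]
Since the $^{II}$-lift of a tensor carries a nontrivial horizontal ($\partial_{i}$-) component while the $^{0}$-lift lives entirely in the $\partial_{\overline{\overline{i}}}$-direction, the two summands on the right are pointwise linearly independent, and so the vanishing of the left-hand side for all $Y,Z$ forces separately $L_{X}\nabla=0$ and $L_{X}H=0$. The reverse implication on this particular pair is immediate from the same identity, and the argument is closed by running through the analogous computations on the remaining lift-pairs.

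The step I expect to be the main obstacle is precisely this last bookkeeping in (ii): confirming that the pairs of lifts other than $({}^{II}Y,{}^{II}Z)$ introduce no new constraint beyond $L_{X}\nabla=0$ and $L_{X}H=0$. The reason this will work is that, by (\ref{AA4.1}), the expression $\overline{\nabla}_{\overline{U}}\overline{V}$ produces an $H$-term only when both entries are $^{II}$-lifts; consequently every cross-derivative involving a $^{0}$- or $^{I}$-lift yields a purely connection-theoretic contribution, and the three-line formal pattern of the displayed computations transcribes unchanged to each remaining case.
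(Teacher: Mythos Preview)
Your proposal is correct and follows essentially the same route as the paper: both arguments rest on the three displayed computations of $(L_{{}^{\alpha}X}\overline{\nabla})({}^{II}Y,{}^{II}Z)$ for $\alpha\in\{0,I,II\}$ and read off the conclusions from the injectivity and componentwise independence of the lift operations. The only difference is that the paper stops after evaluating on the single pair $({}^{II}Y,{}^{II}Z)$, whereas you are more careful in noting that one should in principle run through the remaining lift-pairs to conclude that the full $(1,2)$-tensor $L_{\overline{X}}\overline{\nabla}$ vanishes; your observation that these cases produce no $H$-contribution (since by (\ref{AA4.1}) the deformation term appears only when both arguments are $^{II}$-lifts) is exactly the reason this bookkeeping closes without extra hypotheses.
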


For the Riemannian curvature tensor field $\overline{R}$ of the Levi-Civita
connection $\overline{\nabla }$, we obtain%
\begin{eqnarray*}
&&\overline{R}\left( {}^{II}X,{}^{II}Y\right) {{}^{II}{Z}} \\
{} &{{=\overline{\nabla }}}&_{{}^{II}X}{\overline{\nabla }}%
_{{}^{II}Y}{}^{II}Z-{\overline{\nabla }}_{{}^{II}Y}{\overline{\nabla }}%
_{{}^{II}X}{}^{II}Z\mathrm{-}{\overline{\nabla }}_{\left[ {}^{II}X,{}^{II}Y%
\right] }{}^{II}Z \\
&=&{\overline{\nabla }}_{{}^{II}X}\left[ {}^{II}{\left( {\nabla }%
_{Y}Z\right) }+{{}^{0}{\left( H\left( Y,Z\right) \right) }}\right] \\
&&-{\overline{\nabla }}_{{}^{II}Y}\left[ {}^{II}{\left( {\nabla }%
_{X}Z\right) }+{{}^{0}{\left( H\left( X,Z\right) \right) }}\right] -{%
\overline{\nabla }}_{{}^{II}{\left[ X,Y\right] }}{}^{II}Z \\
&{=}&{\overline{\nabla }}_{{}^{II}X}{}^{II}{\left( {\nabla }_{Y}Z\right) }+{%
\overline{\nabla }}_{{}^{II}X}{}^{0}{\left( H\left( Y,Z\right) \right) }-{%
\overline{\nabla }}_{{}^{II}Y}{}^{II}{\left( {\nabla }_{X}Z\right) } \\
&&-{{\overline{\nabla }}_{{}^{II}Y}{}^{0}{\left( H\left( X,Z\right) \right) }%
}-{}^{II}{\left( {\nabla }_{\left[ X,Y\right] }Z\right) }-{}^{0}{\left(
H\left( \left[ X,Y\right] ,Z\right) \right) } \\
&=&{}^{II}{\left( {\nabla }_{X}{\nabla }_{Y}Z\right) +{}^{0}{\left( H\left(
X,{\nabla }_{Y}Z\right) \right) }}+{}^{0}{\left( {\nabla }_{X}H\left(
Y,Z\right) \right) -{}^{II}{\left( {\nabla }_{Y}{\nabla }_{X}Z\right) }} \\
&&-{}^{0}{\left( H\left( Y,{\nabla }_{X}Z\right) \right) }-{}^{0}{\left( {%
\nabla }_{Y}H(X,Z)\right) }-{}^{II}{\left( {\nabla }_{\left[ X,Y\right]
}Z\right) -{}^{0}{\left( H\left( \left[ X,Y\right] ,Z\right) \right) }} \\
&=&{}^{II}{\left( R\left( X,Y\right) Z\right) +{}^{0}{\left\{ 
\begin{array}{c}
H\left( X,{\nabla }_{Y}Z\right) +{\nabla }_{X}H\left( Y,Z\right) \\ 
-H\left( Y,{\nabla }_{X}Z\right) -{\nabla }_{Y}H\left( X,Z\right) -H\left( %
\left[ X,Y\right] ,Z\right)%
\end{array}%
\right\} }}
\end{eqnarray*}%
from which, using ${\nabla }_{X}Y-{\nabla }_{Y}X=\left[ X,Y\right] $ we get 
\begin{equation*}
\overline{R}\left( {}^{II}{X,{}^{II}Y}\right) {}^{II}Z={}^{II}{\left(
R\left( X,Y\right) Z\right) }+{}^{0}{\left\{ \left( {\nabla }_{X}H\right)
\left( Y,Z\right) -\left( {\nabla }_{Y}H\right) \left( X,Z\right) \right\} .}
\end{equation*}%
Thus we state following result.

\begin{proposition}
Let $(M,g)$ be a pseudo-Riemannian manifold and $T^{2}M$ be its second-order
tangent bundle equipped with the deformed $2-$nd lift metric $\overline{g}$. 
$(T^{2}M,\overline{g})$ is flat if and if the base manifold $M$ is flat and
the condition $\left( {\nabla }_{X}H\right) \left( Y,Z\right) =\left( {%
\nabla }_{Y}H\right) \left( X,Z\right) $ is fulfilled.
\end{proposition}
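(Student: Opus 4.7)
The plan hinges on the observation that the key curvature computation needed is essentially already performed in the derivation immediately preceding the proposition, which yields
\[
\overline{R}({}^{II}X,{}^{II}Y){}^{II}Z = {}^{II}(R(X,Y)Z) + {}^{0}\bigl\{(\nabla_X H)(Y,Z) - (\nabla_Y H)(X,Z)\bigr\}.
\]
For the ``only if'' direction, the ${}^{II}$- and ${}^{0}$-components of a vector field on $T^{2}M$ lie in complementary subspaces of the tangent space at each point, so vanishing of this single component of $\overline{R}$ forces \emph{both} $R(X,Y)Z = 0$ on $M$ and the symmetry relation $(\nabla_X H)(Y,Z) = (\nabla_Y H)(X,Z)$.

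The ``if'' direction requires checking that every other curvature component $\overline{R}({}^{A}X,{}^{B}Y){}^{C}Z$, for $A,B,C \in \{0,I,II\}$, also vanishes. I would compute each by inserting the formulas (\ref{AA4.1}) for $\overline{\nabla}$ and the bracket identities (\ref{AA2.4}) into the standard definition of $\overline{R}$, following exactly the template of the all-${}^{II}$ computation already carried through in the excerpt. Three structural features of those formulas keep the bookkeeping under control: the correction term ${}^{0}H(\cdot,\cdot)$ is nonzero only when both of its arguments are ${}^{II}$-lifts; $\overline{\nabla}_{{}^{0}X}$ annihilates ${}^{0}$- and ${}^{I}$-lifts while sending a ${}^{II}$-lift to a ${}^{0}$-lift (with analogous lowering behaviour for $\overline{\nabla}_{{}^{I}X}$); and by (\ref{AA2.4}) the bracket of any two lifts is itself a lift, possibly zero, of a bracket on $M$. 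Iterating these, each remaining component $\overline{R}({}^{A}X,{}^{B}Y){}^{C}Z$ collapses to a single lift ${}^{E}(R(X,Y)Z)$ for a suitable $E \in \{0,I,II\}$, with no surviving $H$-term except in the distinguished case $A=B=C=II$ already treated.

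Granted this reduction, the converse is immediate: $R=0$ wipes out every base-curvature contribution, and the symmetry of $\nabla H$ eliminates the sole remaining $H$-term. Because the ${}^{0}$-, ${}^{I}$- and ${}^{II}$-lifts of vector fields on $M$ span $T(T^{2}M)$ pointwise and $\overline{R}$ is $C^{\infty}$-trilinear in its entries, this yields $\overline{R}\equiv 0$.

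\textbf{Main obstacle.} The difficulty is not conceptual but combinatorial: there are in principle twenty-seven triples $(A,B,C)$ to verify, and one must be vigilant that no term is dropped when a bracket is replaced by its lift or when $\overline{\nabla}$ acts on a lift of different order. The cleanest organization is to group the cases by how many of $A,B,C$ equal $II$; the three structural rules above then dispose of each group uniformly, leaving only the $(II,II,II)$ case to generate the $H$-dependent contribution already computed in the text.
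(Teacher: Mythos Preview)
Your proposal is correct and follows the paper's route: the paper simply records the formula $\overline{R}({}^{II}X,{}^{II}Y){}^{II}Z = {}^{II}(R(X,Y)Z) + {}^{0}\{(\nabla_X H)(Y,Z) - (\nabla_Y H)(X,Z)\}$ and states the proposition without further comment, whereas you spell out both directions and the remaining lift combinations that the paper leaves implicit. One small wording point: ${}^{II}$- and ${}^{0}$-lifts do not literally sit in complementary subspaces (a ${}^{II}$-lift carries a $\partial_{\overline{\overline{j}}}$ component as well), but your separation argument survives because the $\partial_j$-component of the right-hand side already forces $R(X,Y)Z=0$, after which the ${}^{0}$-term must vanish on its own.
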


The $(0,4)-$Riemannian curvature tensor of the Levi-Civita connection $%
\overline{\nabla }$ is as follows: 
\begin{eqnarray}
&&\tilde{R}\left( {}^{II}{X,{}^{II}Y,{}^{II}{Z,}}{}^{II}W\right)
\label{AA4.5} \\
&=&\overline{g}\left( \overline{R}\left( {}^{II}X,{}^{II}Y\right) {}^{II}{Z,}%
{}^{II}W\right)  \notag \\
&=&\overline{g}\left( {}^{II}{\left( R\left( X,Y\right) Z\right) }%
,{}^{II}W\right) +\overline{g}\left( {}^{0}{\left[ \left( {\nabla }%
_{X}H\right) \left( Y,Z\right) -\left( {\nabla }_{Y}H\right) \left(
X,Z\right) \right] },{}^{II}W\right)  \notag \\
&=&{}^{II}{\left( g\left( R\left( X,Y\right) Z,W\right) \right) +{}^{0}{%
\left( c\left( R\left( X,Y\right) Z,W\right) \right) }}  \notag \\
&&{{+{}^{0}{\left( g\left( \left( {\nabla }_{X}H\right) \left( Y,Z\right)
-\left( {\nabla }_{Y}H\right) \left( X,Z\right) ,W\right) \right) }}}  \notag
\\
&=&{}^{II}{\left( R\left( X,Y,Z,W\right) \right) }+{}^{0}{\left( c\left(
R\left( X,Y\right) Z,W\right) \right) }  \notag \\
&&{+}{}^{0}{\left[ \left( {\nabla }_{X}H\right) \left( Y,Z,W\right) -\left( {%
\nabla }_{Y}H\right) \left( X,Z,W\right) \right] .}  \notag
\end{eqnarray}

Given a manifold $M$ $(\dim (M)\geq 3)$ endowed with a linear connection $%
\nabla $ whose curvature tensor is signed as $R$, for any tensor field of $S$
of type $(0,k),k\geq 1,$ the tensor field $R(X,Y).S$ is expressed in the
form:%
\begin{eqnarray*}
(R(X,Y).S)(X_{1},X_{2},...,X_{k}) &=&-S(R(X,Y)X_{1},X_{2},...,X_{k}) \\
&&-...-S(X_{1},X_{2},...,X_{k-1},R(X,Y)X_{k})
\end{eqnarray*}%
for any vector fields $X_{1},X_{2},...,X_{k},X,Y$ on $M$, where $R(X,Y)$
acts as a derivation on $S$. If $R(X,Y).S=0$, then the manifold $M$ is said
to be $S$ semi-symmetric with respect to the linear connection $\nabla $. A
(pseudo-) Riemannian manifold $(M,g)$ such that its curvature tensor $R$
satisfies the condition 
\begin{equation*}
R(X,Y).R=0
\end{equation*}%
is called a semi-symmetric manifold. Also, note that locally symmetric
manifold $(\nabla R=0)$ are semi-symmetric, but in general the converse is
not true. The semi-symmetric manifold was first studied by Cartan.
Nevertheless, Sinjukov first used the name "semi-symmetric" for manifolds
satisfying the above curvature condition \cite{Sinjukov}. Later, Szabo gave
the full local and global classification of semi-symmetric manifolds \cite%
{Szabo1,Szabo2}. Now we are interested in the semi-symmetry property of $%
T^{2}M$ with the deformed $2-$nd lift metric $\overline{g}$. \noindent For
the sake of simplicity we shall choose $c=g$ in the deformed $2-$nd lift
metric $\overline{g}$. In this case, the relation (\ref{AA4.5}) reduces to%
\begin{equation*}
\tilde{R}\left( {}^{II}{X,{}^{II}Y,{}^{II}{Z,}}{}^{II}W\right) ={}^{II}{%
\left( R\left( X,Y,Z,W\right) \right) }+{}^{0}{\left( R\left( X,Y,Z,W\right)
\right) .}
\end{equation*}

\begin{theorem}
Let $(M,g)$ be a pseudo-Riemannian manifold and $T^{2}M$ be its second-order
tangent bundle equipped with the deformed $2-$nd lift metric $\overline{g}%
=^{II}g+^{0}g$. $(T^{2}M,\overline{g})$ is semi-symmetric if and only if $%
(M,g)$ is semi-symmetric.
\end{theorem}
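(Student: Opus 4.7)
The plan is to verify the tensorial identity $\overline{R}\cdot\tilde{R}=0$ by evaluating it on lifts; since $\{{}^{0}X_{p},{}^{I}X_{p},{}^{II}X_{p}:X\in\mathfrak{X}(M)\}$ spans $T_{p}(T^{2}M)$ at every $p$ and the expression is tensorial, a check on sextuples of lifts is sufficient. The choice $c=g$ yields $\nabla c=0$, so formula (\ref{AA4.4}) forces $H=0$, and the connection rules (\ref{AA4.1}) reduce to the clean identity $\overline{\nabla}_{{}^{II}X}{}^{II}Y={}^{II}(\nabla_{X}Y)$ with no correction term.

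First I would compute $\overline{R}({}^{A}X,{}^{B}Y){}^{C}Z$ for each combination $A,B,C\in\{0,I,II\}$ using (\ref{AA4.1}) together with the bracket identities (\ref{AA2.4}). Assigning the weights $0,1,2$ to the lifts ${}^{0},{}^{I},{}^{II}$ respectively, a direct case check yields the uniform rule that $\overline{R}({}^{A}X,{}^{B}Y){}^{C}Z$ is the weight-$(a+b+c-4)$ lift of $R(X,Y)Z$ when $a+b+c\geq 4$, and vanishes otherwise. Applying the same analysis to (\ref{AA4.5}) with $c=g$ and $H=0$, I obtain the parallel statement that $\tilde{R}({}^{A}X,{}^{B}Y,{}^{C}Z,{}^{D}W)$ equals the weight-$(a+b+c+d-6)$ lift of $R(X,Y,Z,W)$ when this weight is non-negative, together with a single extra summand ${}^{0}(R(X,Y,Z,W))$ appearing only in the all-${}^{II}$ corner; this latter summand originates from the ${}^{0}g$-term of $\overline{g}$, which by Proposition~1 contributes to $\overline{g}(\cdot,\cdot)$ only when both entries are ${}^{II}$-lifts.

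Next I would substitute these two rules into the derivation expansion
\[
(\overline{R}(\overline{X},\overline{Y})\cdot\tilde{R})(\overline{X}_{1},\overline{X}_{2},\overline{X}_{3},\overline{X}_{4})=-\sum_{i=1}^{4}\tilde{R}(\overline{X}_{1},\ldots,\overline{R}(\overline{X},\overline{Y})\overline{X}_{i},\ldots,\overline{X}_{4}),
\]
evaluated on lifts of respective weights $a,b,a_{1},a_{2},a_{3},a_{4}$. Additivity of the weight under the derivation action collapses each of the four sub-terms to a common target, namely the weight-$(a+b+a_{1}+a_{2}+a_{3}+a_{4}-10)$ lift of $(R(X,Y)\cdot R)(X_{1},X_{2},X_{3},X_{4})$ when this weight is non-negative, and zero otherwise; the additional ${}^{0}g$-contribution survives only when all six lifts are ${}^{II}$, producing an extra summand ${}^{0}((R(X,Y)\cdot R)(X_{1},X_{2},X_{3},X_{4}))$. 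Hence $R\cdot R=0$ on $M$ forces $\overline{R}\cdot\tilde{R}=0$ on $T^{2}M$ across all lift combinations.

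For the converse direction, specializing $\overline{R}\cdot\tilde{R}=0$ to the all-${}^{II}$ sextuple gives ${}^{II}(\varphi)+{}^{0}(\varphi)=0$ on $T^{2}M$, where $\varphi=(R(X,Y)\cdot R)(X_{1},X_{2},X_{3},X_{4})$; reading off the definitions in (\ref{AA2.0}) and evaluating at the zero section $y=z=0$ yields $\varphi(x)=0$ for every $x\in M$, so $(M,g)$ is semi-symmetric. The main obstacle is the bookkeeping across the many lift combinations in the curvature computations, but the weight calculus sketched above organizes everything uniformly, so the four case families corresponding to $s\geq 12$, $s=11$, $s=10$, and $s\leq 9$ can all be dispatched together by the same additive rule.
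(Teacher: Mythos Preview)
Your argument is correct, and its core is the same computation the paper carries out: evaluate $\overline{R}\cdot\tilde{R}$ on lifts, use $H=0$ (from $c=g$), and reduce to ${}^{II}\bigl((R\cdot R)(\ldots)\bigr)+{}^{0}\bigl((R\cdot R)(\ldots)\bigr)$. The difference is one of scope. The paper writes out only the all-${}^{II}$ sextuple and stops there, implicitly using the standard fact (from Yano--Ishihara) that a tensor field on $T^{2}M$ which vanishes on all ${}^{II}$-lifts vanishes identically; this is why the single computation ``completes the proof.'' You instead treat every combination of lift types by introducing the weight calculus $\mathrm{wt}({}^{0},{}^{I},{}^{II})=(0,1,2)$ and the rule that $\overline{R}$, $\overline{g}$ and $\tilde{R}$ shift weight by $-4$, $-2$, $-6$ respectively, with the ${}^{0}g$-correction isolated to the top-weight corner. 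Your route is more self-contained (no appeal to the spanning property of ${}^{II}$-lifts on the dense set $\{y\neq0\}$) and the weight bookkeeping is a clean way to dispatch the many mixed cases simultaneously; the paper's route is shorter but leans on that background fact without stating it. Either way the converse direction is the same: restrict ${}^{II}(\varphi)+{}^{0}(\varphi)=0$ to the zero section to recover $\varphi=0$.
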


\begin{proof}
We consider the condition $\overline{R}(\overline{X},\overline{Y}).%
\widetilde{R}=0$ for all vector field $\overline{X}$, $\overline{Y}$ on $%
(T^{2}M,\overline{g})$. \ We calculate%
\begin{eqnarray*}
&&\left( \overline{R}({{^{II}X}},{}^{II}Y).\tilde{R}\right) \left( {}^{II}{%
X_{1},}{}^{II}{X_{2},}{}^{II}{X_{3},{}^{II}{X_{4}}}\right) \\
&=&-\tilde{R}\left( \overline{R}\left( {}^{II}{X,{}^{II}Y}\right) {}^{II}{%
X_{1}},{}^{II}{X_{2}},{}^{II}{X_{3,}{}^{II}{X_{4}}}\right) -\tilde{R}\left(
{}^{II}{X_{1}},\overline{R}\left( {}^{II}{X,{}^{II}Y}\right) {}^{II}{X_{2}}%
,{}^{II}{X_{3,}{}^{II}{X_{4}}}\right) \\
&&-\tilde{R}\left( {}^{II}{X_{1}},{}^{II}{X_{2}},\overline{R}\left( {}^{II}{%
X,{}^{II}Y}\right) {}^{II}{X_{3,}{}^{II}{X_{4}}}\right) -\tilde{R}\left(
{}^{II}{X_{1}},{}^{II}{X_{2}},{}^{II}{X_{3,}\overline{R}\left( {}^{II}{%
X,{}^{II}Y}\right) {}^{II}{X_{4}}}\right)
\end{eqnarray*}%
\begin{eqnarray*}
&=&-\tilde{R}\left( {}^{II}{\left( \overline{R}\left( X,Y\right)
X_{1}\right) },{}^{II}{X_{2}},{}^{II}{X_{3,}{}^{II}{X_{4}}}\right) -\tilde{R}%
\left( {}^{II}{X_{1}},{}^{II}{\left( \overline{R}\left( X,Y\right)
X_{2}\right) },{}^{II}{X_{3,}{}^{II}{X_{4}}}\right) \\
&&-\tilde{R}\left( {}^{II}{X_{1}},{}^{II}{X_{2}},{}^{II}{\left( \overline{R}%
\left( X,Y\right) X_{3}\right) ,{}^{II}{X_{4}}}\right) -\tilde{R}\left(
{}^{II}{X_{1}},{}^{II}{X_{2}},{}^{II}{X_{3,}},{}^{II}{\left( \overline{R}%
\left( X,Y\right) X_{4}\right) }\right)
\end{eqnarray*}%
\begin{eqnarray*}
&=&-{}^{II}{\left[ R\left( R\left( X,Y\right) X_{1},X_{2},X_{3},X_{4}\right) %
\right] -{}^{0}{\left[ R\left( R\left( X,Y\right)
X_{1},X_{2},X_{3},X_{4}\right) \right] }} \\
&&-{}^{II}{\left[ R\left( X_{1},R\left( X,Y\right) X_{2},X_{3},X_{4}\right) %
\right] -{}^{0}{\left[ R\left( X_{1},R\left( X,Y\right)
X_{2},X_{3},X_{4}\right) \right] }} \\
&&-{}^{II}{\left[ R\left( X_{1},X_{2},{R\left( X,Y\right) X}%
_{3},X_{4}\right) \right] -{}^{0}{\left[ R\left( X_{1},X_{2},R\left(
X,Y\right) X_{3},X_{4}\right) \right] }} \\
&&-{}^{II}{\left[ R\left( X_{1},X_{2},X_{3},{R\left( X,Y\right) X}%
_{4}\right) \right] -{}^{0}{\left[ R\left( X_{1},X_{2},X_{3},R\left(
X,Y\right) X_{4}\right) \right] }}
\end{eqnarray*}%
\begin{equation*}
={}^{II}{\left\{ \left( R(X,Y).R\right) \left(
X_{1},X_{2},X_{3},X_{4}\right) \right\} }+{}^{0}{\left\{ \left(
R(X,Y).R\right) \left( X_{1},X_{2},X_{3},X_{4}\right) \right\} }
\end{equation*}%
which completes proof.
\end{proof}

\section{\protect\bigskip Plural-holomorphic $B-$manifolds}

A nilpotent structure on $M$ is a $(1,1)-$tensor field $\gamma $ such that $%
\gamma ^{3}=0$ $(\gamma \neq 0)$. A pure metric (for pure tensors, see \cite%
{Salimov3}) with respect to the nilpotent structure is a pseudo-Riemannian
metric $g$ such that 
\begin{equation*}
g(\gamma X,Y)=g(X,\gamma Y)
\end{equation*}%
for any vector fields $X,Y$ on $M$. Metrics of this type have also been
studied under the name $B$-metrics \cite{V1,V2,V3,V5}, since the metric
tensor $g$ with respect to the structure $\gamma $ is $B$-tensor according
to the terminology accepted in \cite{N}. If $\left( M,\gamma \right) $ is a
manifold with a $B$-metric and a nilpotent structure, we say that $\left(
M,\gamma ,g\right) $ is an almost $B$-manifold. If $\gamma $ is integrable,
we say that $(M,\gamma ,g)$ is a $B$-manifold. A plural-holomorphic $B-$%
manifold \cite{IA} can be defined as a triple $(M,\gamma ,g)$ which consists
of a smooth manifold $M$ endowed with a nilpotent structure $\gamma $ and a $%
B$-metric $g$ such that $\Phi _{\gamma }g=0$, where $\Phi _{\gamma }$ is the
Tachibana operator \cite{Tachibana,YanoAko}: $(\Phi _{\gamma
}g)(X,Y,Z)=(\gamma X)({}g(Y,Z))-X(g(\gamma Y,Z))+g((L_{Y}\gamma
)X,Z)+g(Y,(L_{Z}$ $\gamma )X)$.

Recall that there exists a $(1,1)-$tensor field $\widehat{\gamma }$ on $%
T^{2}M$ which has components of the form%
\begin{equation*}
\widehat{\gamma }=\left( 
\begin{array}{ccc}
0 & 0 & 0 \\ 
I & 0 & 0 \\ 
0 & I & 0%
\end{array}%
\right) ~
\end{equation*}%
with respect to the induced coordinates $\{\xi ^{A}\}$, where $I$ being unit
matrix. The tensor field satisfies $\widehat{\gamma }^{3}=0$, that is,0 $%
T^{2}M$ has a natural integrable nilpotent structure. The natural integrable
nilpotent structure has the properties%
\begin{equation*}
\widehat{\gamma }\text{ }^{0}X=0,\widehat{\gamma }\text{ }^{I}X=^{0}X,%
\widehat{\gamma }\text{ }^{II}X=\text{ }^{I}X
\end{equation*}%
which characterize $\widehat{\gamma }$. We compute, for any vector fields $%
X,Y$ on $M$%
\begin{equation*}
\overline{g}(\widehat{\gamma }\text{ }^{II}X,^{II}Y)=\overline{g}%
(^{I}X,^{II}Y)=^{I}(g(X,Y))
\end{equation*}%
\begin{equation*}
\overline{g}(^{II}X,\widehat{\gamma }^{II}Y)=\overline{g}%
(^{II}X,^{I}Y)=^{I}(g(X,Y))
\end{equation*}%
that is, the the deformed $2-$nd lift metric $\overline{g}$ is a $B-$metric
with respect to $\widehat{\gamma }$. Hence $(T^{2}M,\widehat{\gamma },%
\overline{g})$ is a $B-$manifold. Applying the Tachibana operator $\Phi _{%
\widehat{\gamma }}$ to $\overline{g},$ we get%
\begin{eqnarray*}
(\Phi _{\widehat{\gamma }}\overline{g})(^{II}X,^{II}Y,^{II}Z) &=&(\widehat{%
\gamma }^{II}X)({}\overline{g}(^{II}Y,^{II}Z))-^{II}X(\overline{g}(\widehat{%
\gamma }^{II}Y,^{II}Z)) \\
&&+\overline{g}((L_{^{II}Y}\widehat{\gamma })^{II}X,^{II}Z)+\overline{g}%
(^{II}Y,(L_{^{II}Z}\widehat{\gamma })^{II}X) \\
&=&^{I}X^{II}(g(Y,Z))-^{II}X^{I}(g(Y,Z)) \\
&=&^{I}(X(g(Y,Z)))-^{I}(X(g(Y,Z))) \\
&=&0
\end{eqnarray*}

Hence we state the following theorem.

\begin{theorem}
Let $(M,g)$ be a pseudo-Riemannian manifold and $T^{2}M$ be its second-order
tangent bundle equipped with the deformed $2-$nd lift metric $\overline{g}$
and the natural integrable nilpotent structure $\widehat{\gamma }$. The
triple $(T^{2}M,\widehat{\gamma },\overline{g})$ is a plural-holomorphic $B-$%
manifold.
\end{theorem}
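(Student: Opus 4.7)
The goal is to verify, for $(T^{2}M,\widehat{\gamma},\overline{g})$, the three axioms of a plural-holomorphic $B$-manifold: (a) $\widehat{\gamma}$ is a nilpotent structure ($\widehat{\gamma}^{3}=0$), (b) $\overline{g}$ is pure with respect to $\widehat{\gamma}$, and (c) $\Phi_{\widehat{\gamma}}\overline{g}=0$. The first item is free: the block matrix displayed for $\widehat{\gamma}$ cubes to zero on the nose, and the paper has already flagged $\widehat{\gamma}$ as an integrable nilpotent structure.

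For the $B$-metric property (b), the natural approach is to test on pairs of second-order lifts, since the characterizing action $\widehat{\gamma}\,{}^{0}X=0$, $\widehat{\gamma}\,{}^{I}X={}^{0}X$, $\widehat{\gamma}\,{}^{II}X={}^{I}X$ together with Proposition~1 makes all lift-combinations a short calculation. The substantive case is $\overline{g}(\widehat{\gamma}\,{}^{II}X,{}^{II}Y)=\overline{g}({}^{I}X,{}^{II}Y)={}^{I}(g(X,Y))$, matched by $\overline{g}({}^{II}X,\widehat{\gamma}\,{}^{II}Y)=\overline{g}({}^{II}X,{}^{I}Y)={}^{I}(g(X,Y))$; the remaining mixed cases involving $^{0}$- or $^{I}$-lifts collapse because one side picks up $\widehat{\gamma}\,{}^{0}X=0$ and the other side evaluates on an $^{0}$-$^{I}$ pair where $\overline{g}=0$ by Proposition~1(ii).

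For the Tachibana condition (c), I would evaluate $(\Phi_{\widehat{\gamma}}\overline{g})({}^{II}X,{}^{II}Y,{}^{II}Z)$ by expanding the four-term definition. The two directional-derivative terms become $\,{}^{I}X\bigl({}^{II}(g(Y,Z))+{}^{0}(c(Y,Z))\bigr)-\,{}^{II}X\bigl({}^{I}(g(Y,Z))\bigr)$; the $c$ summand dies because $\,{}^{I}X\cdot{}^{0}f=0$, while the function-lift identities $\,{}^{I}X\cdot{}^{II}f={}^{I}(Xf)={}^{II}X\cdot{}^{I}f$ from the introduction make the two remaining pieces cancel. The two Lie-derivative terms vanish because $(L_{{}^{II}Y}\widehat{\gamma})\,{}^{II}X=L_{{}^{II}Y}(\widehat{\gamma}\,{}^{II}X)-\widehat{\gamma}(L_{{}^{II}Y}\,{}^{II}X)=[{}^{II}Y,{}^{I}X]-\widehat{\gamma}\,{}^{II}[Y,X]={}^{I}[Y,X]-{}^{I}[Y,X]=0$, using the bracket formulas (\ref{AA2.4}). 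An analogous vanishing handles the $L_{{}^{II}Z}\widehat{\gamma}$ term, so the whole Tachibana expression is zero on $^{II}$-lifts.

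The main obstacle is the Lie-derivative computation above; once that is clean, the rest is bookkeeping. The only other thing to justify is that it suffices to test $\Phi_{\widehat{\gamma}}\overline{g}$ on $^{II}$-lifts: the quickest argument is that $\Phi_{\widehat{\gamma}}\overline{g}$ is a genuine $(0,3)$-tensor field on $T^{2}M$, so its vanishing at a point can be checked against any spanning set for the tangent space there, and at each point of $T^{2}M$ the vectors $\,{}^{0}X_{p},{}^{I}X_{p},{}^{II}X_{p}$ (as $X$ ranges over local frames on $M$) span $T_{\cdot}T^{2}M$; the analogous computations with $^{0}$- or $^{I}$-lifts inserted in any slot collapse immediately because $\widehat{\gamma}\,{}^{0}X=0$ forces each $\Phi_{\widehat{\gamma}}$-term to contain a factor that is either zero or a derivative of an $^{0}$-lifted scalar in an $^{0}$-lifted direction, which is zero as well.
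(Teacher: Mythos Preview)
Your proposal is correct and follows essentially the same route as the paper: check the $B$-metric identity on $^{II}$-lifts via Proposition~1, then expand $(\Phi_{\widehat{\gamma}}\overline{g})({}^{II}X,{}^{II}Y,{}^{II}Z)$ and use the function-lift identities ${}^{I}X\cdot{}^{II}f={}^{I}(Xf)={}^{II}X\cdot{}^{I}f$ to cancel the derivative terms. You are in fact more thorough than the paper, which silently drops both the ${}^{0}c$ contribution and the Lie-derivative terms without the explicit justifications you supply.
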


\section{Anti-K\"{a}hler structures on $T^{2}M$}

An almost complex anti-Hermitian manifold $(M,J,g)$ is a real $2k-$%
dimensional differentiable manifold $M$ with an almost complex structure $J$
and a pseudo-Riemannian metric $g$ such that:%
\begin{equation*}
g(JX,Y)=g(X,JY)
\end{equation*}%
for all vector fields $X,Y$ on $M$. An anti-K\"{a}hler manifold can be
defined as a triple $(M,J,g)$ which consists of a smooth manifold $M$
endowed with an almost complex structure $J$ and an anti-Hermitian metric $g$
such that $\nabla J=0$, where $\nabla $ is the Levi-Civita connection of $g$%
. It is well known that the condition $\nabla J=0$ is equivalent to $\mathrm{%
C}$-holomorphicity (analyticity) of the anti-Hermitian metric $g$, i.e. $%
\Phi _{J}g=0$ \cite{Iscan}. Since in dimension $2$ an anti-K\"{a}hler
manifold is flat, we assume in the sequel that $\dim \,M\geq 4$.

The $2-$nd lift of a $(1,1)$-tensor field $J$ to $T^{2}M$ has the followngs%
\begin{equation}
^{II}J(^{II}X)=^{II}(JX)\text{, }^{II}J({}^{I}X)=^{I}(JX)\text{, }%
^{II}J({}^{0}X)=^{0}(JX)  \label{AA4.6}
\end{equation}%
for any $X$ on $M$. Moreover, it is well known that \i f $J$ is an almost
complex structure on $(M,g)$, then $^{II}J$ is an almost complex structure
on $T^{2}M$ \cite{YanoIshihara}. Now we prove the following theorem.

\begin{theorem}
Let $(M,J,g)$ be an anti-K\"{a}hler manifold. Then $T^{2}M$ is an anti-K\"{a}%
hler manifold equipped with the deformed $2-$nd lift metric $\overline{g}$
and the almost complex structure $^{II}J$ if and only if the$\ $symmetric $%
(0,2)-$tensor field $c$ on $M$ is a holomorphic tensor field with respect to
the almost complex structure $J$.
\end{theorem}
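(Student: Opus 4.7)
The plan is to verify the two conditions that characterize an anti-K\"ahler structure on $(T^{2}M, {}^{II}J, \overline{g})$: (a) $\overline{g}$ is anti-Hermitian with respect to ${}^{II}J$, and (b) $\Phi_{{}^{II}J}\overline{g} = 0$, the Tachibana (analytic) form of $\overline{\nabla}\,{}^{II}J = 0$ recalled at the start of Section 5.

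For condition (a), I would directly pair all combinations of lifts ${}^{0}X, {}^{I}X, {}^{II}X$ using Proposition 1 and the formulas (\ref{AA4.6}). Every case is immediate from the anti-Hermiticity of $g$ except $({}^{II}X, {}^{II}Y)$, where one computes
\begin{equation*}
\overline{g}({}^{II}J\,{}^{II}X, {}^{II}Y) - \overline{g}({}^{II}X, {}^{II}J\,{}^{II}Y) = {}^{0}\bigl(c(JX,Y) - c(X,JY)\bigr),
\end{equation*}
so (a) is equivalent to the purity of $c$ with respect to $J$.

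For condition (b), I would split $\overline{g} = {}^{II}g + {}^{0}c$ and evaluate $\Phi_{{}^{II}J}\overline{g}$ on triples of lifted vector fields. The key auxiliary identity is $(L_{{}^{II}Y}\,{}^{II}J)\,{}^{II}X = {}^{II}((L_{Y}J)X)$ (with its analogues for lower lifts), which follows from (\ref{AA4.6}) and the Lie-bracket table (\ref{AA2.4}); combined with the action of lifted vector fields on lifted functions derived from (\ref{AA2.0})--(\ref{AA2.3}), a direct calculation should give
\begin{equation*}
(\Phi_{{}^{II}J}\overline{g})({}^{II}X, {}^{II}Y, {}^{II}Z) = {}^{II}\!\bigl((\Phi_{J}g)(X,Y,Z)\bigr) + {}^{0}\!\bigl((\Phi_{J}c)(X,Y,Z)\bigr),
\end{equation*}
while triples involving at least one ${}^{0}X$ or ${}^{I}X$ factor produce only lifts of $\Phi_{J}g$ (since $c$ enters only the $(II,II)$-block of $\overline{g}$ and lower-order lifts annihilate ${}^{0}$-lifted functions). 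The anti-K\"ahler hypothesis $\Phi_{J}g = 0$ collapses every such expression to zero, except the ${}^{0}$-part of the top triple, whose vanishing for all $X,Y,Z$ is equivalent to $\Phi_{J}c = 0$.

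Combining (a) and (b), $(T^{2}M, {}^{II}J, \overline{g})$ is anti-K\"ahler if and only if $c$ is pure with respect to $J$ and $\Phi_{J}c = 0$, which together mean exactly that $c$ is a holomorphic $(0,2)$-tensor field with respect to $J$. The main technical obstacle will be the bookkeeping in $\Phi_{{}^{II}J}({}^{0}c)$: one must expand the Lie derivative of ${}^{II}J$ and the derivations ${}^{II}X({}^{0}c(Y,Z))$ carefully and confirm that the various cross terms aggregate precisely into ${}^{0}(\Phi_{J}c)$, with no residual contribution from the ${}^{II}g$-piece.
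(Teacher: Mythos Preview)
Your proposal is correct and follows essentially the same route as the paper: first reduce the anti-Hermitian condition on $\overline{g}$ to purity of $c$ via the $({}^{II}X,{}^{II}Y)$ pairing, then compute $(\Phi_{{}^{II}J}\overline{g})({}^{II}X,{}^{II}Y,{}^{II}Z)={}^{II}\bigl((\Phi_{J}g)(X,Y,Z)\bigr)+{}^{0}\bigl((\Phi_{J}c)(X,Y,Z)\bigr)$ and invoke $\Phi_{J}g=0$. You are in fact slightly more careful than the paper, which checks only the top triple and does not comment on the mixed-lift cases; your observation that those cases involve only lifts of $\Phi_{J}g$ (since $c$ sits solely in the $(II,II)$-block) is exactly what is needed to make the argument complete.
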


\begin{proof}
Let $(M,J,g)$ be a anti-K\"{a}hler manifold. Then we have 
\begin{eqnarray*}
&&{}\overline{g}\left( ^{II}J(^{II}X),^{II}Y\right) -{}\overline{g}\left(
^{II}X,^{II}J(^{II}Y)\right) \\
&=&{}\overline{g}\left( ^{II}(JX),^{II}Y\right) -{}\overline{g}\left(
^{II}X,^{II}(JY)\right) \\
&=&c(JX,Y)-c(X,JY).
\end{eqnarray*}%
From the last equations, the deformed $2-$nd lift metric $\overline{g}$ is
anti-Hermitian with respect to $^{II}J$ if and only if the$\ $symmetric $%
(0,2)-$tensor field $c$ is pure with respect to $J$.

Now, we are interested in the holomorphy property of the deformed $2-$nd
lift metric $\overline{g}$ with respect to $^{II}$ $J$. We calculate 
\begin{eqnarray*}
&&(\Phi _{^{II}J}{}\overline{g})(^{II}X,^{II}Y,^{II}Z) \\
&=&(^{II}J^{II}X)(\overline{g}(^{II}Y,^{II}Z))-^{II}X(\overline{g}%
(^{II}J^{II}Y,^{II}Z)) \\
&+&{}\overline{g}((L_{^{II}Y}\text{ }^{II}J)^{II}X,^{II}Z)+{}\overline{g}%
(^{II}Y,(L_{^{II}Z}\text{ }^{II}J)^{II}X) \\
&=&^{II}(JX)\{^{II}(g(X,Y))+^{0}(c(X,Y))\}-^{II}X%
\{^{II}(g(JY,Z))+^{0}(c(JY,Z))\} \\
&&+^{II}\{g((L_{Y}\text{ }J)X,Z)\}+^{0}\{c((L_{Y}\text{ }J)X,Z)\}+^{II}%
\{g(Y,(L_{Z}\text{ }J)X)\}+^{0}\{c(Y,(L_{Z}\text{ }J)X)\} \\
&=&^{II}\{(JX)(g(X,Y))-X(g(JY,Z))+g((L_{Y}\text{ }J)X,Z)+g(Y,(L_{Z}\text{ }%
J)X)\} \\
&&+^{0}\{JX)(c(X,Y))-X(c(JY,Z))+c((L_{Y}\text{ }J)X,Z)+c(Y,(L_{Z}\text{ }%
J)X)\} \\
&=&^{II}\{(\Phi _{J}{}g)(X,Y,Z)\}+^{II}\{(\Phi _{J}{}c)(X,Y,Z)\}.
\end{eqnarray*}%
Hence, from the relation above, since $(\Phi _{J}{}g)=0$, it follows that $%
\Phi _{^{II}J}{}{}\overline{g}=0$ if and only if $\Phi _{J}{}c=0$, that is, $%
c$ is holomorphic. This completes the proof.
\end{proof}

\end{document}